\newtheorem{thm}{Theorem}
\newtheorem*{que*}{Question}
\newtheorem{lem}{Lemma}
\numberwithin{equation}{section}
\newcommand{\Z}{\mathbb Z}
\newcommand{\N}{\mathbb N}
\newcommand{\eps}{\varepsilon}
\newcommand{\dd}{\mathrm{d}}
\newcommand{\push}[2]{{#2}_*(#1)}
\newcommand{\pushh}[1]{{#1}_*}
\newcommand{\acts}{\curvearrowright}
\newcommand{\folner}{F\o{}lner }
\newcommand{\symdiff}{\triangle}
\DeclareMathOperator{\graph}{graph}
\DeclareMathOperator{\id}{id}
\renewcommand{\phi}{\varphi}
\begin{document}


\baselineskip=17pt


\title{On lifting invariant probability measures}

\author{Tomasz Cie\'sla}
\address{Department of Mathematics and Statistics, McGill University, 805, Sherbrooke Street West Montreal, Quebec, Canada H3A 2K6}
\email{tomasz.ciesla@mail.mcgill.ca}

\date{}

\begin{abstract}
In this note we study when an invariant probability measure lifts to an invariant measure. Consider a standard Borel space $X$, a Borel probability measure $\mu$ on $X$, a Borel map $T \colon X \to X$ preserving $\mu$, a Polish space $Y$, a continuous map $S\colon Y \to Y$, and a Borel surjection $p \colon Y \to X$ with $p\circ S = T \circ p$. We prove that if fibers of $p$ are compact then $\mu$ lifts to an $S$-invariant measure on $Y$. 
\end{abstract}

\subjclass[2010]{Primary 37L40}

\keywords{invariant measures, lift}

\maketitle

\section{Introduction}

In this note we address the following question asked by Feliks Przytycki:

\begin{que*}
Let $X$ be a compact metric space and $Y$ a Polish space. Let $T\colon X \to X$, $S\colon Y \to Y$ be continuous maps. Let $p\colon Y \to X$ be a Borel surjection with $p \circ S = T \circ p$. Let $\mu$ be a $T$-invariant Borel probability measure on $X$. When does $\mu$ lift to an $S$-invariant Borel probability measure on $Y$?
\end{que*}

The answer is affirmative under the assumption that fibers of $p$ are finite and the sets $\{x \in X \colon |p^{-1}(x)|=n\}$ are $T$-invariant (for instance, this holds if $S$ and $T$ are homeomorphisms). A special case of this ($|p^{-1}(x)|\le 2$ for all $x\in X$) appeared in the proof of \cite[Corollary 10.2]{przytycki}. An obvious modification of Przytycki's argument shows that one can lift $\mu$ to an $S$-invariant measure $\nu$ where $\nu$ is defined by
$$\nu(A)=\int_X \frac{|A \cap p^{-1}(x)|}{|p^{-1}(x)|} \dd \mu(x).$$

It is also known that if $Y$ is compact and $p$ is continuous then $\mu$ lifts to an $S$-invariant measure $\nu$. Note that $p$ induces the push-forward map $\pushh{p} \colon P(Y) \to P(X)$ (here, $P(Y)$ and $P(X)$ denote the spaces of all Borel probability measures on $Y$ and $X$, respectively) which is a continuous surjection, so the preimage of $\mu$ is a non-empty compact subset $K$ of $P(Y)$. Clearly, $K$ is convex. Since $\mu$ is $T$-invariant and $p\circ S = T \circ p$, we obtain $\pushh{S}(K)\subset K$. Hence by Schauder's fixed-point theorem there exists $\nu \in K$ with $\nu=\push{\nu}{S}$. This means: $\nu$ is a lift of $\mu$ which is $S$-invariant.

On the other hand, if the assumption on compactness of fibers of $p$ is dropped then it may happen that $\mu$ does not lift to an $S$-invariant measure even if $Y$ is compact, $T$ is the identity map and $S$ is a homeomorphism. For instance, let $X=\{0, 1\}$ and $Y = \Z \cup \{\infty\}$ be the one-point compactification of the countable discrete space $\Z$. Let $T=\id_X$, $S(n)=n+1$ for $n\in \Z$, $S(\infty)=\infty$, $p(n)=0$ for $n\in \Z$, $p(\infty)=1$, and $\mu=\frac 12\delta_0+\frac 12\delta_1$. Suppose that $\nu$ is an $S$-invariant measure on $Y$. By $S$-invariance, $\nu(\{n\})=\nu(\{0\})$ for all $n\in \Z$. If $\nu(\{0\})=0$ then $\nu(\Z)=\sum_{n\in \Z} \nu(\{n\})=0$ and if $\nu(\{0\})>0$ then $\nu(\Z)=\sum_{n\in \Z} \nu(\{n\})=\infty$. In both cases $\nu(\Z)\neq \frac 12$, hence $\mu$ does not lift to an $S$-invariant measure.

We shall work in a more general context. We drop the assumption on compactness of $X$ and continuity of $T$. The following result generalizes both special cases discussed above.

\begin{thm} \label{thm:main1}
Let $X$ be a standard Borel space with a Borel probability measure $\mu$ and let $T:X\to X$ be a $\mu$-measurable map preserving $\mu$. Let $Y$ be a Polish space and let $S \colon Y \to Y$ be a continuous map. Let $p \colon Y \to X$ be a Borel map such that $p \circ S=T\circ p$ and $\mu(p(Y))=1$. Suppose that for $\mu$-a.a. $x\in X$ the set $p^{-1}(x)$ is compact. Then there exists a Borel probability measure $\nu$ on $Y$ which is $S$-invariant and $\push{\nu}{p}=\mu$.
\end{thm}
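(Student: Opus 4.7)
My plan is to establish tightness of the set of lifts via a Hausdorff-continuity argument for the fiber map, and then pass to a weak limit of Cesaro averages.

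\emph{Setup.} I would first fix a Polish topology on $X$ compatible with its standard Borel structure, so that $\mu$ becomes a tight Borel probability on a Polish space. Choose a $\mu$-conull Borel set $X_0 \subset X$ on which $p^{-1}(x)$ is non-empty and compact (possible because $\mu(p(Y)) = 1$ and fibers are compact $\mu$-a.e.). Let $K := \{\nu \in P(Y) : p_*\nu = \mu\}$. Since the fibers of $p|_{p^{-1}(X_0)}$ are compact hence $K_\sigma$, Arsenin--Kunugui yields a Borel selector $s \colon X_0 \to Y$ of $p$, giving $s_*\mu \in K$, so $K$ is non-empty. The relation $p \circ S = T \circ p$ combined with $T_*\mu = \mu$ forces $S_*(K) \subseteq K$, and $S_*$ is affine and weakly continuous on $P(Y)$ since $S$ is continuous.

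\emph{Tightness.} The crux is tightness of $K$. Consider the set-valued map $F \colon X_0 \to \mathcal{K}(Y)$, $F(x) = p^{-1}(x)$, where $\mathcal{K}(Y)$ denotes the Polish space of compact subsets of $Y$ with the Hausdorff metric. Arsenin--Kunugui applied to $p|_{p^{-1}(X_0)}$ shows that both $\{x \in X_0 : p^{-1}(x) \cap U \ne \emptyset\}$ and $\{x \in X_0 : p^{-1}(x) \subset V\}$ are Borel for open $U, V \subset Y$ (the relevant fibers are open, respectively closed, in a compact set, hence $K_\sigma$), so $F$ is Borel measurable. For each $\varepsilon > 0$ Lusin's theorem then produces a compact $X_\varepsilon \subset X_0$ with $\mu(X_\varepsilon) > 1 - \varepsilon$ on which $F$ is Hausdorff continuous. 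A standard argument---closedness through Hausdorff convergence on subsequences, total boundedness via uniformly covering nearby fibers---then shows $L_\varepsilon := \bigcup_{x \in X_\varepsilon} F(x) = p^{-1}(X_\varepsilon)$ is compact in $Y$, and the same sequential reasoning shows $p|_{L_\varepsilon} \colon L_\varepsilon \to X_\varepsilon$ is continuous. Since $\nu(L_\varepsilon) = \mu(X_\varepsilon) > 1 - \varepsilon$ for every $\nu \in K$, the family $K$ is tight.

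\emph{Cesaro averages and the lift.} Set $\nu_n := \frac{1}{n}\sum_{k=0}^{n-1} S_*^k (s_*\mu) \in K$. Tightness and Prokhorov give a weakly convergent subsequence $\nu_{n_j} \to \nu$ with $\nu$ a Borel probability on $Y$; the usual Krylov--Bogolyubov argument shows $S_*\nu = \nu$. To verify $p_*\nu = \mu$, take $f \in C_b(X)$ and $\varepsilon > 0$: $(f \circ p)|_{L_\varepsilon}$ is continuous on the compact $L_\varepsilon$ (since $p|_{L_\varepsilon}$ is), so by Tietze it extends to some $\tilde g \in C_b(Y)$ with $\|\tilde g\|_\infty \le \|f\|_\infty$. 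For any $\lambda \in P(Y)$ with $\lambda(Y \setminus L_\varepsilon) \le \varepsilon$ one has $\bigl|\int (\tilde g - f \circ p)\, d\lambda\bigr| \le 2\|f\|_\infty \varepsilon$, and both $\lambda = \nu_{n_j}$ and $\lambda = \nu$ satisfy this (the latter by Portmanteau applied to the closed set $L_\varepsilon$). Combined with $\int (f \circ p)\, d\nu_{n_j} = \int f\, d\mu$ and $\int \tilde g\, d\nu_{n_j} \to \int \tilde g\, d\nu$, we get $\bigl|\int f\, d\mu - \int (f \circ p)\, d\nu\bigr| \le 4\|f\|_\infty \varepsilon$; letting $\varepsilon \to 0$ yields $p_*\nu = \mu$.

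The principal obstacle will be the tightness step: getting the fiber-valued map $F$ measurable enough into $\mathcal{K}(Y)$ to apply Lusin, via an Arsenin--Kunugui-style analysis of images of open/closed sets under $p|_{p^{-1}(X_0)}$, and then deducing both compactness of $L_\varepsilon$ and continuity of $p|_{L_\varepsilon}$ from the Hausdorff continuity of $F$ on the compact base $X_\varepsilon$. With these in place the Cesaro averaging combined with a Tietze-based approximation of $f \circ p$ by a continuous function on $Y$ delivers the $S$-invariant lift.
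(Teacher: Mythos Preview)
Your proposal is correct and follows essentially the same strategy as the paper: both apply Lusin's theorem to the Borel fiber map $x \mapsto p^{-1}(x) \in \mathcal{K}(Y)$ to produce compact sets $L_\varepsilon = p^{-1}(X_\varepsilon)$, deduce compactness of the set of lifts (you via tightness and Prokhorov in $Y$, the paper by first embedding $Y$ into the Hilbert cube and showing closedness), and then run the Ces\`aro/Krylov--Bogolyubov averaging. The only execution difference is that the paper checks $p_*\nu=\mu$ via the Portmanteau inequality on the closed sets $p^{-1}(X_\varepsilon)$ directly, whereas you additionally extract continuity of $p|_{L_\varepsilon}$ and test against $C_b(X)$ through a Tietze extension---both routes are valid.
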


One can prove even more general result: instead of single maps $S$ and $T$ one can work with a left amenable semigroup $\Gamma$ (for instance, an abelian semigroup) acting on $Y$ by continuous maps and acting on $X$ by measure-preserving maps so that the actions of $\Gamma$ on $Y$ and $X$ commute with $p$.

\begin{thm} \label{thm:main2}
Let $X$ be a standard Borel space with a Borel probability measure $\mu$. Let $Y$ be a Polish space. Let $p \colon Y \to X$ be a Borel map with $\mu(p(Y))=1$ and such that the set $p^{-1}(x)$ is compact for $\mu$-a.a. $x\in X$. Let $\Gamma$ be a left amenable semigroup. Consider actions $\Gamma \acts Y$, $\Gamma \acts X$ so that:
\begin{itemize}
\item $\Gamma$ acts on $Y$ by continuous maps, i.e. for all $\gamma \in \Gamma$ the map $S_\gamma \colon Y \to Y$, $S_\gamma(y)=\gamma y$ is continuous,
\item $\mu$ is $\Gamma$-invariant, i.e. for all $\gamma \in \Gamma$ the map $T_\gamma \colon X \to X$, $T_\gamma(x)=\gamma x$ preserves $\mu$,
\item The actions of $\Gamma$ on $Y$ and $X$ commute with $p$, i.e. $p \circ S_\gamma = T_\gamma \circ p$ for all $\gamma \in \Gamma$.
\end{itemize}
Then there exists a $\Gamma$-invariant Borel probability measure $\nu$ on $Y$ such that $\push{\nu}{p}=\mu$.
\end{thm}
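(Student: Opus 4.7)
The plan is to adapt the Schauder-based argument sketched in the introduction for compact $Y$ to the present amenable-semigroup setting. Everything about the underlying geometry is the same as in Theorem~\ref{thm:main1}; the only change is that the final fixed-point step must be upgraded from a single continuous affine map to an amenable semigroup of such maps.

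First I would set up the compact convex arena. Let
\[
K := \{\nu \in P(Y) \colon \pushh{p}\nu = \mu\}.
\]
I expect $K$ to be non-empty, convex, and weak-$\ast$ compact in $P(Y)$. Non-emptiness follows from a Borel selection argument (pick a Borel section $s\colon X\to Y$ with $p\circ s=\id$ $\mu$-a.e., which exists by a Kuratowski--Ryll-Nardzewski-type theorem applied to the compact-valued multifunction $x\mapsto p^{-1}(x)$, and take $\pushh{s}\mu$); convexity is immediate. Compactness reduces by Prokhorov to tightness. For tightness, given $\epsilon>0$, the compact-fiber hypothesis combined with a Lusin-type argument applied to the compact-set-valued multifunction $x\mapsto p^{-1}(x)$ yields a Borel set $A\subset X$ with $\mu(A)\ge 1-\epsilon$ for which $p^{-1}(A)$ is compact in $Y$; disintegrating any $\nu\in K$ along $p$ as $\nu=\int\nu_x\,\dd\mu(x)$ with each $\nu_x$ supported on $p^{-1}(x)$ then gives $\nu(p^{-1}(A))\ge\mu(A)\ge 1-\epsilon$ uniformly in $\nu\in K$. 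This is exactly the technical core that is already needed to run the single-map argument underlying Theorem~\ref{thm:main1}.

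Next I would let $\Gamma$ act on $K$. For each $\gamma\in\Gamma$ the push-forward $\pushh{S_\gamma}\colon P(Y)\to P(Y)$ is affine and weak-$\ast$ continuous because $S_\gamma$ is continuous. The intertwining relation together with $\Gamma$-invariance of $\mu$ yields
\[
\pushh{p}(\pushh{S_\gamma}\nu)=\pushh{(T_\gamma\circ p)}\nu=\pushh{T_\gamma}\mu=\mu,
\]
so $\pushh{S_\gamma}$ maps $K$ into itself, and the assignment $\gamma\mapsto\pushh{S_\gamma}|_K$ is a semigroup action of $\Gamma$ on $K$ by continuous affine transformations. I would then invoke Day's fixed-point theorem: any action of a left amenable semigroup on a non-empty compact convex subset of a Hausdorff locally convex topological vector space by continuous affine maps admits a common fixed point. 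Applied to $\Gamma\acts K$ sitting inside $P(Y)$ with its weak-$\ast$ topology, this produces $\nu\in K$ with $\pushh{S_\gamma}\nu=\nu$ for every $\gamma\in\Gamma$, i.e.\ the desired $\Gamma$-invariant Borel probability lift of $\mu$.

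The principal obstacle I anticipate is the weak-$\ast$ compactness of $K$ in the first step. Since $p$ is merely Borel rather than continuous, the push-forward $\pushh{p}\colon P(Y)\to P(X)$ is not weak-$\ast$ continuous in general, so closedness of $K$ is not automatic; the fiber-compactness hypothesis is precisely what permits one to extract tightness (and hence both closedness and compactness) via the Lusin-type argument above. Once that technical input is in place, the passage from a single continuous map (handled by Schauder in Theorem~\ref{thm:main1}) to an amenable semigroup of continuous maps is a clean substitution of Day's theorem for Schauder's.
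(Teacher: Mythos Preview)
Your proposal is correct and follows essentially the same route as the paper: establish that $K=\{\nu\in P(Y):\pushh{p}\nu=\mu\}$ is non-empty, convex, and compact via the Lusin argument on the Borel compact-valued map $x\mapsto p^{-1}(x)$, verify that $\Gamma$ acts on $K$ by continuous affine push-forwards using the intertwining relation, and invoke Day's fixed-point theorem. The only wrinkle is the phrase ``tightness (and hence both closedness and compactness)'': Prokhorov gives only relative compactness, so closedness of $K$ still needs an argument---the paper supplies it by reducing to compact $Y$ via a Hilbert-cube embedding and then checking closedness directly with Portmanteau applied to the very same compact sets $p^{-1}(A)$ your Lusin step produces (note also that disintegration is unnecessary for the tightness bound, since $\nu(p^{-1}(A))=\pushh{p}\nu(A)=\mu(A)$ exactly).
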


Clearly, Theorem \ref{thm:main1} is a special case of Theorem \ref{thm:main2}; to see this just take $\Gamma=(\N,+)$ with actions on $X$ and $Y$ given by $\N \times X \ni (n,x) \mapsto T^n x \in X$ and $\N \times Y \ni (n,y) \mapsto S^n y \in Y$, respectively. Therefore it is enough to prove Theorem \ref{thm:main2}. Nevertheless, we provide a proof of Theorem \ref{thm:main1} which avoids using tools from theory of amenable semigroups.

\section{Preliminaries}

In this section we recall some definitions and useful facts.

A standard Borel space is an uncountable set $X$ with a $\sigma$-algebra $\Sigma$ of subsets of $X$ such that there exists a Polish (i.e. separable, completely metrizable) topology $\tau$ on $X$ whose Borel $\sigma$-algebra is $\Sigma$.

Given a topological space $Y$ we denote by $K(Y)$ the collection of all compact subsets of $Y$. The set $K(Y)$ can be endowed with Vietoris topology, i.e. the topology generated by sets 
$$\{K \in K(Y) \colon K \cap U \neq \emptyset\} \quad \text{ and } \quad \{K \in K(Y) \colon K \subset U \}$$
where $U\subset Y$ is open. If $Y$ is Polish, compact, then $K(Y)$ is Polish, compact, respectively. 

For a Polish space $Y$ we denote by $P(Y)$ the set of all Borel probability measures on $Y$ endowed with the weak$^*$ topology, i.e. the topology 
generated by sets of the form
$$\left\{\sigma \in P(Y) \colon \left|\int_Y f \dd \sigma - \int_Y f \dd \sigma_0 \right| < \eps \right\}$$
where $\sigma_0\in P(Y)$, $f \colon Y \to \mathbb R$ is continuous and bounded, and $\eps>0$. Traditionally, a somewhat erroneous terminology is in use: a sequence of measures convergent in the weak$^*$ topology is sometimes said to converge weakly. If $Y$ is a compact metric space then $P(Y)$ is a compact metric space.

A semigroup $\Gamma$ is called left amenable if there exists a left invariant mean for $\Gamma$. For a more detailed definition of left amenable semigroups we refer the reader to \cite[0.18]{paterson}.

\section{Proof of Theorems \ref{thm:main1} and \ref{thm:main2}}

We start with the following key lemma.

\begin{lem}\label{lem:1}
Let $X$ be a standard Borel space with a Borel probability measure $\mu$. Let $Y$ be a Polish space. Let $p \colon Y \to X$ be a Borel map such that $\mu(p(Y))=1$. Let $M\subset P(Y)$ be the set of all measures $\sigma$ with $\push{\sigma}{p}=\mu$.
If for $\mu$-a.a. $x\in X$ the set $p^{-1}(x)$ is compact then $M$ is a non-empty convex compact subset of $P(Y)$.
\end{lem}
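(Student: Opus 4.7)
Convexity of $M$ is immediate from linearity of $\pushh{p}$. For non-emptiness I pick a Borel set $X_1\subset X$ of full $\mu$-measure on which $p^{-1}(x)$ is non-empty and compact, and apply the Arsenin--Kunugui selection theorem to the restriction $p\colon p^{-1}(X_1)\to X_1$ to obtain a Borel section $s\colon X_1\to Y$ with $p\circ s=\id$. Extending $s$ arbitrarily off $X_1$ and setting $\sigma=\push{\mu}{s}$ produces an element of $M$.

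For compactness, my plan is to establish a single uniform tightness estimate for $M$, feed it to Prokhorov to obtain relative compactness, and then verify closedness by hand. Endow $X$ with a compatible Polish topology. For $\eps>0$, inner regularity of $\mu$ yields a compact $A_0\subset X_1$ with $\mu(A_0)>1-\eps/2$. The multifunction $\phi\colon X_1\to K(Y)$, $\phi(x)=p^{-1}(x)$, is Borel measurable: under the compact-fibres hypothesis, Arsenin--Kunugui ensures that images $p(B)$ of Borel sets are Borel in $X_1$, so the Vietoris sub-basic sets pull back to Borel sets. Lusin's theorem then yields a compact $A\subset A_0$ with $\mu(A)>1-\eps$ on which $\phi$ is continuous. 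Hence $\phi(A)\subset K(Y)$ is compact, and a short open-cover argument in the Vietoris topology shows that $p^{-1}(A)=\bigcup\phi(A)$ is compact in $Y$. Since $\sigma(p^{-1}(A))=\mu(A)>1-\eps$ for every $\sigma\in M$, the family $M$ is tight.

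For closedness, let $\sigma_n\to\sigma$ in weak$^*$ with each $\sigma_n\in M$. Fix a continuous bounded $f\colon X\to\R$ and $\eps>0$, and let $A$ and $K=p^{-1}(A)$ be as above. A short sequential argument using continuity of $\phi|_A$ shows that $p|_K\colon K\to A$ is continuous, so $f\circ p$ restricts to a continuous function on $K$, which I extend by Tietze to a continuous bounded $g\colon Y\to\R$ with $\|g\|_\infty\le\|f\|_\infty$. Since $\sigma_n(Y\setminus K)<\eps$ and $\int f\circ p\, \dd\sigma_n=\int f\, \dd\mu$, I get $|\int g\, \dd\sigma_n-\int f\, \dd\mu|\le 2\|f\|_\infty\eps$. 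Portmanteau applied to the closed set $K$ gives $\sigma(K)\ge\limsup\sigma_n(K)>1-\eps$, hence $|\int g\, \dd\sigma-\int f\circ p\, \dd\sigma|\le 2\|f\|_\infty\eps$. Combining these with $\int g\, \dd\sigma_n\to\int g\, \dd\sigma$, and then letting $n\to\infty$ and $\eps\to 0$, I conclude $\int f\circ p\, \dd\sigma=\int f\, \dd\mu$ for every continuous bounded $f$, so $\push{\sigma}{p}=\mu$.

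The main obstacle I expect is the Borel measurability of $\phi$ together with the passage from a Lusin continuity set to a genuinely compact saturated set $p^{-1}(A)\subset Y$; this is where the compactness of fibres is used decisively, both via Arsenin--Kunugui and via the fact that a compact family of compact sets in $K(Y)$ has compact union. Everything else is a routine combination of Prokhorov, Portmanteau, Tietze, and standard pushforward identities.
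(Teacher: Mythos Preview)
Your argument is correct and rests on the same key device as the paper's: show that the compact-valued multifunction $\phi(x)=p^{-1}(x)$ is Borel, apply Lusin to obtain a compact $A\subset X$ on which $\phi$ is continuous, and conclude that $p^{-1}(A)=\bigcup\phi(A)$ is compact in $Y$. The packaging, however, differs. The paper first reduces to compact $Y$ by embedding $Y$ into the Hilbert cube and adjoining a dummy point $*$ to $X$ to absorb the complement; once $P(Y)$ is compact only closedness of $M$ must be checked, and this is done by proving $\push{\nu}{p}(A)\ge\mu(A)$ for every Borel $A$ directly via Portmanteau on the compact set $p^{-1}(K)$, with equality following by complementation. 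You instead remain in the non-compact Polish setting, use the Lusin sets to establish uniform tightness of $M$ (so Prokhorov gives relative compactness), and verify closedness by testing against continuous bounded $f\colon X\to\R$ through a Tietze extension, which in turn requires the auxiliary observation that $p|_{p^{-1}(A)}$ is continuous. Both routes are valid; the paper's is shorter at the closedness step (no Tietze, no continuity of the restricted $p$), while yours avoids the compactification bookkeeping. Incidentally, your closedness step can be shortened to match the paper's even without compactifying: once you have compact $p^{-1}(K)\subset p^{-1}(A)$ with $\mu(K)>\mu(A)-\eps$, Portmanteau alone gives $\push{\sigma}{p}(A)\ge\sigma(p^{-1}(K))\ge\limsup_n\sigma_n(p^{-1}(K))=\mu(K)$, and complementation finishes.
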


\begin{proof}
Suppose additionally that $Y$ is compact. The general case will be considered later.

First of all, the set $M$ is non-empty. For instance, by \cite[18.3]{kechris} there exists a $\mu$-measurable function $u \colon p(Y) \to Y$ with $u(x)\in p^{-1}(x)$ for all $x\in p(Y)$. Define a measure $\sigma \in P(Y)$ by $\sigma(B) = \int_{p(Y)} \delta_{u(x)}(B) \dd \mu(x)$. Then $\sigma \in M$. Secondly, it is clear that $M$ is convex. It remains to prove that $M$ is compact. Let $\nu_1, \nu_2, \nu_3, \ldots$ be a sequence of elements of $M$ convergent to some $\nu \in P(Y)$. We shall prove that $\nu \in M$, i.e. that $\push{\nu}{p}=\mu$.

\medskip

\noindent{\it Claim 1.} 
Let $A \subset X$ be a Borel set. Then $\push{\nu}{p}(A)\ge \mu(A)$.

\begin{proof}[Proof of Claim 1] 
This is trivial if $\mu(A)=0$, so let us assume that $\mu(A)>0$. Fix $\eps>0$. Endow $X$ with a Polish topology giving $X$ its Borel structure. Let $X' \subset X$ be a Borel set of full measure such that for all $x\in X'$ the set $p^{-1}(x)$ is compact.

Let $f \colon X' \to K(Y)$ be given by $f(x)=p^{-1}(x)$. We shall prove that $f$ is Borel. Recall that the Borel structure of $K(Y)$ is generated by sets of the form $B=\{K\in K(Y) \colon K \cap U \neq \emptyset\}$ where $U \subset Y$ is open (see \cite[12.C]{kechris}). Therefore it is enough to prove that the set $f^{-1}(B)$ is Borel whenever $B$ is of the aforementioned form. Note that 
\begin{align*}
f^{-1}(B) & = \{x\in X' \colon f(x) \in B\} = \{x \in X' \colon f(x) \cap U \neq \emptyset\} \\
&=\{x\in X' \colon \exists y\in U \ p(y)=x \} = \pi_X(\graph(p) \cap (U \times X')),
\end{align*}
which is Borel by \cite[28.8]{kechris}. Hence $f$ is Borel.

By Lusin's Theorem there exists a non-empty compact subset $K \subset A\cap X'$ such that $\mu(K)>\mu(A)-\eps$ and the function $f|_K \colon K \to K(Y)$ is continuous.
Then the set $\{f(x) \colon x \in K\}$ is compact in $K(Y)$, as it is a continuous image of a compact set. By \cite[4.29]{kechris}, the set $f(K)=\bigcup\{f(x) \colon x \in K\}=p^{-1}(K)$ is a compact subset of $Y$. 

Since $\nu_n$ converges to $\nu$ weakly and $p^{-1}(K)$ is compact, we have by Portmanteau lemma 
$$\push{\nu}{p}(K) = \nu(p^{-1}(K)) \ge \limsup_{n\to\infty} \nu_n(p^{-1}(K)) = \limsup_{n\to\infty} \mu(K) = \mu(K).$$
It follows that $\push{\nu}{p}(A) \ge \push{\nu}{p}(K) \ge \mu(K) \ge \mu(A)-\eps$. Since $\eps>0$ can be chosen arbitrarily, the claim follows.
\end{proof}

\noindent {\it Claim 2.} 
Let $A \subset X$ be a Borel set. Then $\push{\nu}{p}(A) \le \mu(A)$.

\begin{proof}[Proof of Claim 2]
Claim 1 for the set $X \setminus A$ gives $\push{\nu}{p}(X \setminus A)\ge \mu(X\setminus A)$. This can be rewritten as $1-\push{\nu}{p}(A) \ge 1-\mu(A)$, hence $\push{\nu}{p}(A) \le \mu(A)$.
\end{proof}

Claims 1 and 2 imply that $\push{\nu}{p}(A)=\mu(A)$ for all Borel sets $A \subset X$. Therefore $\push{\nu}{p}=\mu$, which proves that $M$ is closed in $P(Y)$ and hence compact. This finishes the proof in the case when $Y$ is compact.

It remains to consider the case when $Y$ is non-compact. Recall that any Polish space embeds homeomorphically into the Hilbert cube $[0,1]^\N$ as a $G_\delta$ subset. Write $Y'=[0,1]^\N$ for brevity and view $Y$ as a subspace of $Y'$. Let $\Sigma$ be the Borel $\sigma$-algebra of $X$. Let $X'=X \cup \{*\}$. Let $\Sigma'=\Sigma \cup \{A \cup \{*\} \colon A \in \Sigma\}$. Then $\Sigma'$ gives $X'$ a structure of standard Borel space. Let $\mu'$ be a Borel probability measure on $X'$ given by $\mu'(B)=\mu(B\cap X)$ for any $B \in \Sigma'$. Let $p'\colon Y' \to X'$ be given by $p'(y)=p(y)$ if $y\in Y$ and $p'(y)=*$ otherwise. Note that $p'$ is Borel. Let $M'\subset P(Y')$ be the set of all measures $\sigma'$ with $\push{\sigma'}{p}=\mu'$. Then $X'$, $Y'$, $\mu'$, $p'$, and $M'$ satisfy the hypotheses of the lemma and in addition $Y'$ is compact, so $M'$ is a non-empty convex subset of $P(Y')$. It is clear that the map $M \ni \sigma \mapsto \sigma' \in P(Y')$ given by $\sigma'(B)=\sigma(B \cap Y)$ maps $M$ onto $M'$ homeomorphically. Therefore $M$ is a non-empty compact subset of $P(Y)$, which obviously is convex as well.
\end{proof}

We prove Theorem \ref{thm:main1} using the averaging trick. 

\begin{proof}[Proof of Theorem \ref{thm:main1}]
Let $M\subset P(Y)$ be the set of all measures $\sigma$ with $\push{\sigma}{p}=\mu$. By Lemma \ref{lem:1}, $M$ is non-empty, convex and compact.

Pick an arbitrary $\sigma \in M$. For all positive integers $n$ define 
$$\nu_n = \frac 1n \sum_{i=0}^{n-1} \push{\sigma}{(S^i)}.$$ 
Note that for all $i$
$$\push{\push{\sigma}{(S^i)}}{p}  = \push{\sigma}{(p\circ S^i)} = \push{\sigma}{(T^i \circ p)} = \push{\push{\sigma}{p}}{(T^i)} = \push{\mu}{(T^i)} = \mu
$$
so $\push{\sigma}{(S^i)} \in M$ for all $i$ and since $M$ is convex $\nu_n \in M$ for all $n$. So, by compactness of $M$ there exists a subsequence $\nu_{n_1}, \nu_{n_2}, \nu_{n_3}, \ldots$ convergent to some $\nu \in M$. Then $\nu$ is $S$-invariant by the proof of the Bogolyubov-Krylov theorem (see \cite[Theorem 1.1]{sinai}). Hence $\nu$ is as required.
\end{proof}

The averaging trick can be used to prove Theorem \ref{thm:main2} provided $\Gamma$ admits a \folner sequence, i.e. an increasing sequence of finite sets $F_n \subset \Gamma$ such that $\Gamma = \bigcup_{n\in\N} F_n$ and $\lim_{n\to\infty} \frac{|gF_n\symdiff F_n|}{|F_n|}=0$ for all $g \in \Gamma$. This is the case for instance for amenable groups and for abelian semigroups. However, there exist amenable semigroups admitting no \folner sequences, so we need a different method to prove Theorem \ref{thm:main2}.

\begin{proof}[Proof of Theorem \ref{thm:main2}]
Let $M\subset P(Y)$ be the set of all measures $\sigma$ satisfying $\push{\sigma}{p}=\mu$. By Lemma \ref{lem:1}, $M$ is a non-empty convex compact subset of $P(Y)$.

Note that the action $\Gamma \acts Y$ induces an action $\Gamma \acts P(Y)$ by push-forwards: $\gamma \sigma = \push{\sigma}{(S_\gamma)}$. Also, $\Gamma M \subset M$. Indeed, for any $\gamma \in \Gamma$ and $\sigma \in M$
$$\push{\gamma\sigma}{p} = \push{\push{\sigma}{(S_\gamma)}}{p} = \push{\sigma}{(p\circ S_\gamma)} = \push{\sigma}{(T_\gamma \circ p)} = \push{\push{\sigma}{p}}{(T_\gamma)} = \push{\mu}{(T_\gamma)} = \mu.$$
Hence by Day's fixed-point theorem \cite{day} there exists $\nu \in M$ with $\nu = \push{\nu}{(S_\gamma)}$ for all $\gamma \in \Gamma$. 
\end{proof}

\subsection*{Acknowledgements}
The author is grateful to Feliks Przytycki, Marcin Sabok and Gabor Elek for helpful comments. Also thanks are due to the anonymous referee for corrections and suggestions which improved the exposition of results.

This research was partially supported by the NCN (National Science Centre, Poland) through the grant Harmonia no. 2018/30/M/ST1/00668.


\normalsize

\end{document}